\documentclass[12pt]{amsart}
\usepackage{amsmath,amssymb,amsthm,amsfonts}
\usepackage{latexsym}
\usepackage{graphicx,color}
\ExecuteOptions{dvips} \marginparwidth 0pt \oddsidemargin 0.75
truecm \evensidemargin 0.75 truecm \marginparsep 0pt \topmargin
-20pt \textheight 22.0 truecm \textwidth 14.5 truecm


\def\Ree{\mbox{Re}}

\def\pd#1#2{\frac{\partial #1}{\partial #2}}
\def\vv<#1>{\langle#1\rangle}
\def\w{\wedge}

\def\XXint#1#2{\setbox0=\hbox{$#1{#2}{\int}$}{#2}\kern-.5\wd0 }

\def\XXint#1#2#3{{\setbox0=\hbox{$#1{#2#3}{\int}$}
     \vcenter{\hbox{$#2#3$}}\kern-.5\wd0}}



\def\vv<#1>{\langle#1\rangle}

\newtheorem{theorem}{Theorem}[section]

\newtheorem{lemma}{Lemma}[section]
\newtheorem{proposition}{Proposition}[section]
\newtheorem{corollary}{Corollary}[section]
\theoremstyle{definition}

\theoremstyle{remark}

\newtheorem{remark}{Remark}[section]

\numberwithin{equation}{section}
\title{A Note on K\"ahler-Ricci flow}
\begin{document}

\author{ Chengjie Yu}\address{Department of Mathematics, Shantou University, Shantou, Guangdong, P.R.China
} \email{cjyu@stu.edu.cn}\maketitle
\begin{abstract}
Let $g(t)$ with $t\in [0,T)$ be a complete solution to the
Kaehler-Ricci flow: $\frac{d}{dt}g_{i\bar j}=-R_{i\bar j}$ where $T$
may be $\infty$. In this article, we show that the curvatures of
$g(t)$ is uniformly bounded if the solution $g(t)$ is uniformly
equivalet. This result is stronger than the main result in \v Se\v
sum \cite{sesum} within the category of K\"ahler-Ricci flow.
\end{abstract}

\section{Introduction}

Let $M^n$ be a K\"ahler manifold of complex dimension $n$. Let
$g(t)$ with $t\in [0,T)$ be a complete solution to the
K\"ahler-Ricci flow:
\begin{equation}\label{eqn-Kaehler-Ricci-flow}
\pd{g_{i\bar j}}{t}=-R_{i\bar j}+cg_{i\bar j},
\end{equation}
where $T$ may be $\infty$ and $c$ is a real number. We assume that
$g(t)$ satisfies Shi's estimate (Ref. Shi \cite{Shi-RFU}):
\begin{equation}\label{eqn-Shi-estimate}
\|\nabla^k Rm\|^2(x,t)\leq \frac{C(k,s)}{t^k}
\end{equation}
for any nonnegative integer $k$, any $s\in(0,T)$ and any $(x,t)\in
M\times (0,s]$, where $C(k,s)$ is a positive constant depending on
$k$ and $s$. When $M^n$ is a compact K\"ahler manifold, this
assumption is superfluous because a solution to the K\"ahler-Ricci
flow (\ref{eqn-Kaehler-Ricci-flow}) will automatically satisfy Shi's
estimate (\ref{eqn-Shi-estimate}). By the uniqueness result of
Chen-Zhu \cite{Chen-Zhu-uniqueness}, such a solution to
K\"ahler-Ricci flow (\ref{eqn-Kaehler-Ricci-flow}) is uniquely
determined by its initial metric $g(0)$.

In this article, we obtain the following main result.
\begin{theorem} Let $g(t)$ with $t\in [0,T)$ be a complete solution to the
K\"ahler-Ricci flow (\ref{eqn-Kaehler-Ricci-flow}) satisfying Shi's
estimate (\ref{eqn-Shi-estimate}). Suppose that there is a positive
constant $C$ such that
\begin{equation*}
C^{-1}g(0)\leq g(t)\leq Cg(0).
\end{equation*}
Then, for any nonnegative integer $k$, there are two positive
constants $A_k$ and $B_k$ such that
\begin{equation*}
\|\nabla^kRm\|^2(x,t)\leq A_k+\frac{B_k}{t^k}
\end{equation*}
for any $(x,t)\in M\times [0,T)$.
\end{theorem}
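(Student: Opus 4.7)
My proof plan decomposes the statement into the critical $k=0$ case --- a uniform bound $\|Rm\|^2\le A_0$ on $M\times[0,T)$ --- and the higher-$k$ cases. Granted the $k=0$ bound, the estimates for $k\ge 1$ follow by a standard Shi-type iteration: with curvature globally bounded, Shi's estimate applied on short forward time intervals of length $\min(t,1)$ gives $\|\nabla^k Rm\|^2\le C_k/\tau^k$ for small $\tau$, together with a uniform constant for $t\ge 1$, and these combine to give the claimed form $A_k+B_k/t^k$. Thus the heart of the matter is proving the uniform boundedness of $\|Rm\|$.

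For the $k=0$ case, I would argue by contradiction via parabolic rescaling. If $\sup_{M\times[0,T)}\|Rm\|=\infty$, then since the Shi estimate (\ref{eqn-Shi-estimate}) already gives curvature bounded on each $M\times(0,s]$ with $s<T$, the blow-up must occur as $t\to T^-$. Using Hamilton's point-picking, select $(x_i,t_i)$ with $Q_i:=\|Rm\|(x_i,t_i)\to\infty$ and $\|Rm\|\le 2Q_i$ on controlled parabolic neighborhoods of $(x_i,t_i)$. Rescale $g_i(\tau):=Q_i\,g(t_i+Q_i^{-1}\tau)$. These are K\"ahler-Ricci flows (with coefficient $c/Q_i\to 0$) satisfying $\|Rm(g_i)\|(x_i,0)=1$ and with uniformly controlled curvature on expanding parabolic neighborhoods of $(x_i,0)$.

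The key step exploits the uniform equivalence in the rescaled picture: the bound $C^{-1}g(0)\le g(t)\le Cg(0)$ rescales to $C^{-1}(Q_ig(0))\le g_i(\tau)\le C(Q_ig(0))$. Since $g(0)$ has globally bounded curvature (by Shi with $k=0$), the background metrics $h_i:=Q_ig(0)$ have curvature of order $Q_i^{-1}\to 0$. Applying Hamilton's compactness theorem (with the required injectivity radius lower bound at $x_i$ supplied by $\kappa$-noncollapsing or by a Cheeger-Gromov-Taylor estimate for the nearly-flat $h_i$), one extracts a complete ancient K\"ahler-Ricci flow $(M_\infty,g_\infty(\tau),x_\infty)$ on $(-\infty,0]$ with $\|Rm(g_\infty)\|(x_\infty,0)=1$, uniformly bi-Lipschitz to a complete flat K\"ahler limiting metric $h_\infty$ on $M_\infty$; in particular $M_\infty$ is a flat K\"ahler manifold, a quotient of $\C^n$.

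The main obstacle is to rule out such a limit. Lifting to the universal cover $\C^n$, one writes $g_\infty(\tau)=h_\infty+\sqrt{-1}\partial\bar\partial\phi_\tau$ with bounded complex Hessian $\sqrt{-1}\partial\bar\partial\phi_\tau$ and bounded $\partial_\tau\phi_\tau=\log[\det(h_\infty+\sqrt{-1}\partial\bar\partial\phi_\tau)/\det h_\infty]$. I would then invoke the Chen-Zhu uniqueness theorem \cite{Chen-Zhu-uniqueness} (or, equivalently, a Liouville-type argument for the parabolic complex Monge-Amp\`ere equation on $\C^n$) to conclude that any complete ancient K\"ahler-Ricci flow whose time-slices are uniformly equivalent to a fixed flat metric must be the stationary flat solution. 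This forces $Rm(g_\infty)\equiv 0$, contradicting $\|Rm(g_\infty)\|(x_\infty,0)=1$ and establishing the uniform curvature bound.
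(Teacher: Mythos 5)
Your reduction of the cases $k\ge 1$ to $k=0$ via Shi-type derivative estimates is fine, but the two pillars of your $k=0$ argument have genuine gaps in the generality of the theorem, where $M$ may be noncompact and $T$ may be infinite. First, the compactness step: to extract the pointed limit $(M_\infty,g_\infty(\tau),x_\infty)$ you need a uniform injectivity radius (or volume) lower bound at the points $(x_i,t_i)$ at scale $Q_i^{-1/2}$, and neither of your two suggestions supplies it. Perelman's $\kappa$-noncollapsing requires $M$ compact and $T<\infty$, which is not assumed here; and the near-flatness of $h_i=Q_ig(0)$ gives no noncollapsing whatsoever, since flat or nearly flat complete manifolds can have arbitrarily small injectivity radius (think of flat cylinders or tori factors of small girth), and the blow-up points may well escape into a collapsed region of a noncompact $M$. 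The Cheeger--Gromov--Taylor estimate needs a local volume lower bound that is nowhere in the hypotheses. In addition, smooth convergence of the backgrounds $h_i$ to a flat $h_\infty$ would require bounds on $\nabla^k Rm(g(0))$ for $k\ge1$, which are not assumed (Shi's estimate \eqref{eqn-Shi-estimate} degenerates as $t\to 0$). So the limit object your contradiction hinges on may simply fail to exist.

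Second, even granting the limit, the rigidity step is not justified. The Chen--Zhu theorem \cite{Chen-Zhu-uniqueness} is a uniqueness statement for the Cauchy problem (two complete bounded-curvature solutions with the same initial metric coincide); an ancient solution has no initial slice to compare, so Chen--Zhu does not imply that a complete ancient K\"ahler--Ricci flow uniformly equivalent to a flat metric is stationary. What you actually need is a parabolic Liouville theorem for the complex Monge--Amp\`ere flow on $\mathbb{C}^n$ with uniformly bounded complex Hessian; this is a nontrivial statement whose proof requires precisely the kind of Evans--Krylov/Calabi-type a priori estimates that constitute the real work. This is in fact what the paper does directly and without any blow-up: it bounds $\|\nabla_0^k g\|$ on all of $M\times[0,T)$ by Yau/Calabi-style second- and third-order estimates for the potential (the quantities $S$ and $Q$, then an induction for higher derivatives), made global by an Omori--Yau-type maximum principle for uniformly equivalent families, and then reads off the curvature bounds from \eqref{eqn-curvature-initial-metric}. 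Your blow-up strategy is essentially \v Se\v sum's \cite{sesum}, which works for compact $M$ and finite $T$; to cover the theorem as stated you would have to both handle collapsing (e.g. by pulling back to local covers or balls via the exponential map) and prove the ancient-solution Liouville theorem, neither of which is done in your sketch.
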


In particular, when $k=0$, this result is stronger than the main
result in \v Se\v sum \cite{sesum} within the category of
K\"ahler-Ricci flow.

This result is useful for obtaining long time existence for
K\"ahler-Ricci flow. As an application, we will give a simple long
time existence of K\"aler-Ricci flow which implies Cao's result (Ref. \cite{Cao}) on
long time existence of K\"ahler-Ricci flow on compact K\"ahler
manifolds.

\vspace{0.5cm}
\noindent\textbf{Acknowledgements.} The author would like to thank Prof. Jianguo Cao, Prof. Peng Lu and
Prof. Xingwang Xu for interesting and encouragement.

\section{A maximum principle}
\begin{proposition}\label{prop-omori-yau}
Let $g(t)$ with $t\in [0,T]$ be a smooth family of complete K\"ahler
metrics on $M$. Suppose that the sectional curvatures of $g(0)$ are
bounded and there is positive constant $C_0$ such that
\begin{equation*}
C_0^{-1}g(0)\leq g(t)\leq C_0g(0)
\end{equation*}
for any $t\in [0,T]$. Let $h\in C^\infty(M\times[0,T])$ be such that
\begin{equation*}
\sup_M h(0)<\sup_{M\times[0,T]}h<\infty.
\end{equation*}
Then, there is a sequence $(x_k,t_k)\in M\times (0,T]$ such that
\begin{equation*}
\lim_{k\to\infty}h(x_k,t_k)=\sup_{M\times[0,T]}h,\
\lim_{k\to\infty}\|\nabla h\|(x_k,t_k)=0,\ \pd{h}{t}(x_k,t_k)\geq 0,
\end{equation*}
and
\begin{equation*}
\limsup_{k\to\infty}\Delta h(x_k,t_k)\leq 0.
\end{equation*}
\end{proposition}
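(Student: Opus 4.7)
The plan is to prove this by an Omori-Yau type maximum principle, using a small barrier that grows at infinity on $M$ to force the supremum to be attained at an interior point, where first- and second-order conditions apply. Sending the barrier to zero then produces the required sequence.

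First I would construct a smooth exhaustion $\phi:M\to[1,\infty)$ that is bounded in $C^2(M,g(0))$. Since $g(0)$ has bounded sectional curvature, this is obtained by smoothing the $g(0)$-distance function $r(x)=d_{g(0)}(x,p)$ from a fixed basepoint $p$ (via Greene-Wu convolution, or equivalently via Shi's short-time Ricci-flow smoothing), yielding $\phi$ comparable to $r+1$ with $\|\nabla\phi\|_{g(0)}\le C_1$ and $|\nabla^2\phi|_{g(0)}\le C_1$. In particular the $(1,1)$-form $\partial\bar\partial\phi$ has bounded operator norm with respect to $g(0)$. Combining with the uniform equivalence $C_0^{-1}g(0)\le g(t)\le C_0g(0)$ then yields $\|\nabla\phi\|_{g(t)}\le C_2$ and $|\Delta_{g(t)}\phi|=|\tr_{g(t)}\partial\bar\partial\phi|\le C_2$ uniformly in $t\in[0,T]$.

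For each $\epsilon>0$, set $\tilde h:=h-\epsilon\phi$. Since $h$ is bounded above and $\phi\to\infty$ at infinity, $\tilde h$ attains its maximum on $M\times[0,T]$ at some point $(x_\epsilon,t_\epsilon)$. The hypothesis $\sup_M h(0)<\sup_{M\times[0,T]}h=:S$ guarantees $t_\epsilon>0$: fixing $(y_0,s_0)$ with $s_0>0$ and $h(y_0,s_0)$ strictly between $\sup_M h(0)$ and $S$, and restricting $\epsilon$ so small that $\epsilon\phi(y_0)<h(y_0,s_0)-\sup_M h(0)$, one has $\tilde h(y_0,s_0)>\sup_M h(0)\ge\sup_M\tilde h(\cdot,0)$, so the maximum cannot occur on $M\times\{0\}$. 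At the interior maximum the standard conditions then yield
\begin{equation*}
\nabla h(x_\epsilon,t_\epsilon)=\epsilon\nabla\phi(x_\epsilon),\quad \pd{h}{t}(x_\epsilon,t_\epsilon)\ge 0,\quad \Delta h(x_\epsilon,t_\epsilon)\le\epsilon\Delta_{g(t_\epsilon)}\phi(x_\epsilon),
\end{equation*}
where $\Delta$ is the $g(t_\epsilon)$-Laplacian. Choosing $\epsilon_k\to 0$ and setting $(x_k,t_k):=(x_{\epsilon_k},t_{\epsilon_k})$, the uniform barrier estimates from Step 1 immediately force $\|\nabla h\|(x_k,t_k)\le\epsilon_k C_2\to 0$ and $\Delta h(x_k,t_k)\le\epsilon_k C_2$, giving $\limsup\Delta h(x_k,t_k)\le 0$; the inequality $\pd{h}{t}(x_k,t_k)\ge 0$ holds for each $k$ by construction. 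Finally, the maximizing property $h(x_k,t_k)-\epsilon_k\phi(x_k)\ge h(y,s)-\epsilon_k\phi(y)$ for all $(y,s)$, after fixing $(y,s)$ and letting $k\to\infty$, gives $\liminf h(x_k,t_k)\ge h(y,s)$; taking the supremum over $(y,s)$ then yields $h(x_k,t_k)\to S$.

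The main obstacle is the barrier construction in Step 1: the raw distance function of $g(0)$ is only Lipschitz and fails to be $C^2$ along the cut locus, so the bounded-sectional-curvature hypothesis on $g(0)$ is genuinely needed to smooth it while preserving the Hessian bound, which in turn is what controls $\Delta_{g(t)}\phi$ uniformly in $t$ via the metric equivalence. Once this exhaustion is in hand, the remainder is a direct application of the interior maximum principle to the family $h-\epsilon\phi$.
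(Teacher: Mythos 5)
Your proposal is correct and follows essentially the same route as the paper: both rest on Shi's smoothed exhaustion function with uniform gradient and Hessian (Laplacian) bounds for $g(0)$, transferred to $g(t)$ by the uniform equivalence, followed by a localized maximum-principle argument with the parameter sent to zero. The only difference is cosmetic --- you penalize additively with $h-\epsilon\phi$ while the paper maximizes the product $\eta(\rho/R)\,h$ with a compactly supported cutoff --- and your handling of $|\Delta_{g(t)}\phi|$ via the Hessian bound is, if anything, slightly more careful than the paper's.
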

\begin{proof}
By adding a constant to $h$, we can suppose that
$\sup_{M\times[0,T]}h=1$. Let $p$ be a fixed point. Let $\rho$ be a
smooth function on $M$, such that
\begin{equation*}
\left\{\begin{array}{l}C_{1}^{-1}(1+r_0(p,x))\leq \rho(x)\leq
C_1(1+r_0(p,x))\\\|\nabla_0\rho\|\leq C_1\\ |\Delta_0\rho|\leq C_1
\end{array}\right.
\end{equation*}
all over $M$, where $C_1$ is some positive constant. (c.f. Theorem
3.6 in Shi \cite{Shi-RFU}.) Then
\begin{equation*}
|\Delta_t\rho|=|g^{\bar ji}(t)\rho_{i\bar j}|\leq C_0|g^{\bar
ji}(0)\rho_{i\bar j}|=C_0|\Delta_0\rho|\leq C_2.
\end{equation*}
all over $M\times [0,T]$ for some positive constant $C_2$.

Let $\eta$ be a smooth function on $[0,\infty)$ such that $\eta=1$
on $[0,1]$, $\eta=0$ on $[2,\infty)$, and $-2\leq \eta'\leq 0$.

For any $\epsilon \in (0,1/2)$, let $(x_0,t_0)\in M\times(0,T]$ be
such that
\begin{equation*}
\max\{1-\epsilon,\sup_Mh(0)\}<h(x_0,t_0)\leq 1.
\end{equation*}
Let $R>\rho(x_0)$ be a constant to be determined. Let
$\phi=\eta(\rho/R)$ and let $(\bar x,\bar t)$ be a maximum point of
$\phi h$. It is clear that $\bar t>0$ since
\begin{equation*}
h(\bar x,\bar t)\geq(\phi h)(\bar x,\bar t)\geq (\phi
h)(x_0,t_0)=h(x_0,t_0)>\sup_{M} h(x,0).
\end{equation*}
Moreover, we have
\begin{equation*}
1-\epsilon\leq (\phi h)(\bar x, \bar t)\leq 1, \nabla (\phi h)(\bar
x,\bar t)=0, \ \pd{(\phi h)}{t}(\bar x,\bar t)\geq 0
\end{equation*}
and
\begin{equation*}
\Delta (\phi h)(\bar x, \bar t)\leq 0.
\end{equation*}
By the first inequality and that $\sup_{M\times[0,s]}h=1$, we know
that
\begin{equation*}
\frac{1}{2}\leq 1-\epsilon\leq h(\bar x,\bar t)\leq 1, \ \mbox{and}\
\frac{1}{2}\leq (1-\epsilon)\leq \phi(\bar x,\bar t)\leq 1.
\end{equation*}
Then,
\begin{equation*}
\|\nabla h(\bar x,\bar t)\|=\frac{h\|\nabla \phi\|}{\phi}(\bar
x,\bar t)\leq \frac{C_3}{R},
\end{equation*}
\begin{equation*}
\pd{h}{t}(\bar x,\bar t)\geq 0,
\end{equation*}
and
\begin{equation*}
\Delta h(\bar x,\bar t)\leq-\frac{h\Delta \phi+2\vv<\nabla h,\nabla
\phi>}{\phi}(\bar x,\bar t)\leq \frac{C_4}{R}.
\end{equation*}
By choosing $R>\max\{\rho(x_0), \frac{C_3}{\epsilon},
\frac{C_4}{\epsilon}\}$, we get

\begin{equation*}
\|\nabla h(\bar x,\bar t)\|\leq \epsilon,\ \mbox{and}\ \Delta h(\bar
x,\bar t)\leq \epsilon.
\end{equation*}
Therefore, by choosing a sequence $\epsilon_k\to 0^+$, we get a
sequence $(x_k,t_k)$ satisfying our requirements.
\end{proof}

\section{curvatures estimates}
\begin{lemma}\label{lemma-local-boundness}
Let $g(t)$ be a solution to the K\"ahler-Ricci flow
(\ref{eqn-Kaehler-Ricci-flow}) on $[0,T]$ with $T<\infty$ satisfying
the following assumption:
\begin{equation}
\|\nabla ^kRm\|\leq C_k
\end{equation}
all over $M\times [0,T]$. Then, for any nonnegative integer $k$,
there is positive constant $A_k$, such that
\begin{equation*}
\|\nabla^k_{0}g\|_{g_0}\leq A_k
\end{equation*}
all over $M\times[0,T]$.
\begin{proof}
Because the curvatures are uniformly bounded, by the K\"ahler-Ricci
flow equation (\ref{eqn-Kaehler-Ricci-flow}), all the metrics are
uniformly equivalent. So, the lemma is true for $k=0$.

In the follows, ";" means taking covariant derivatives with respect
to $g_0$. By equation (\ref{eqn-Kaehler-Ricci-flow}), we have
\begin{equation}\label{eqn-g-i-j-k}
\begin{split}
\pd{g_{i\bar j;k}}{t}=&-R_{i\bar j;k}+cg_{i\bar
j;k}\\
=&-\big(\nabla_{k}R_{i\bar j}-(g_0)^{\bar\beta\alpha}R_{\alpha\bar
j}\nabla_{k}(g_0)_{i\bar\beta}\big)+cg_{i\bar j;k}\\
=&-\big[\nabla_{k}R_{i\bar j}+(g_0)^{\bar\beta\alpha}R_{\alpha\bar
j}\big(g^{\bar\delta\gamma}(g_0)_{\gamma\bar\beta}g_{i\bar\delta;k}\big)\big]+cg_{i\bar
j;k}.
\end{split}
\end{equation}
Moreover, by the assumption on curvatures and the case that $k=0$,
\begin{equation*}
\pd{\|\nabla_0g\|^2_{g_0}}{t}\leq C_1+C_2\|\nabla_0g\|^2_{g_0}
\end{equation*}
where  $C_1$ and  $C_2$ are some positive constants. Therefore, the
lemma is true for $k=1$ and $\|\nabla_0Rm\|$ is also uniformly
bounded since $\nabla_0Rm$ can be expressed as a combination of
$\nabla Rm$ and $\nabla_0g$.

Computing further on step by step by taking more covariant
derivatives with respect to $g_0$ on both sides of equation
(\ref{eqn-g-i-j-k}), we know that the lemma is true for all
nonnegative integer $k$.
\end{proof}
\end{lemma}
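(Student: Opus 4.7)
The plan is to argue by induction on $k$, treating the evolution equation for $\nabla_0^k g$ as a linear ODE inequality along the flow and applying Gronwall on the finite interval $[0,T]$. The crucial observation that makes this work is that $g_0$ is time-independent, so the operator $\nabla_0^k$ commutes with $\partial_t$. Thus differentiating the K\"ahler-Ricci flow equation $k$ times in the $g_0$-connection gives
\begin{equation*}
\pd{(\nabla_0^k g_{i\bar j})}{t}= -\nabla_0^k R_{i\bar j}+c\,\nabla_0^k g_{i\bar j},
\end{equation*}
and the task reduces to controlling $\nabla_0^k R_{i\bar j}$ in terms of quantities already known to be bounded.

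\textbf{Base case.} For $k=0$, the hypothesis $\|Rm\|\leq C_0$ bounds $R_{i\bar j}$ pointwise (in the $g(t)$-norm, and hence in the $g_0$-norm up to a factor that is itself controlled by integrating the flow equation). This is exactly the standard argument that a Ricci-flow solution with bounded $|\Ric|$ has uniformly equivalent metrics on a bounded time interval.

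\textbf{Inductive step.} The key identity is the standard schematic relation between the two connections: $\Gamma^{g(t)}-\Gamma^{g_0}= g^{-1}\ast \nabla_0 g$. Consequently, for any tensor $S$,
\begin{equation*}
\nabla_0 S = \nabla S + (g^{-1}\ast \nabla_0 g)\ast S,
\end{equation*}
and iterating,
\begin{equation*}
\nabla_0^k S = \nabla^k S + \sum (\text{products of }\nabla_0^j g,\ j\le k,\ \text{with }\nabla^\ell S,\ \ell<k).
\end{equation*}
Applying this with $S=Rm$, combined with the standing hypothesis $\|\nabla^\ell Rm\|\le C_\ell$ and the inductive hypothesis on $\|\nabla_0^j g\|_{g_0}$ for $j<k$, bounds $\nabla_0^{k} R_{i\bar j}$ linearly in $\nabla_0^k g$ plus lower-order bounded terms. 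Substituting into the evolution equation above and taking the $g_0$-norm squared yields
\begin{equation*}
\pd{\|\nabla_0^k g\|_{g_0}^2}{t}\le F_k + G_k\,\|\nabla_0^k g\|_{g_0}^2,
\end{equation*}
where $F_k,G_k$ depend only on already-controlled quantities. Gronwall on $[0,T]$ then gives the desired uniform bound $A_k$.

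\textbf{Main obstacle.} The one genuinely delicate point is the bookkeeping: converting $\nabla_0^k R_{i\bar j}$ into the $\nabla^{g(t)}$-derivatives of $Rm$ together with products of $\nabla_0^j g$ $(j\le k)$, and then contracting this with $g_0^{-1}$ to extract a clean inequality for $\|\nabla_0^k g\|_{g_0}^2$. Once one checks that at each order $k$ the highest-derivative-of-$g_0$ term appearing on the right is linear in $\nabla_0^k g$ with a coefficient controlled by the induction hypothesis (and that all other terms are bounded), the argument closes. No new analytic input is needed beyond the standard connection-difference formula and Gronwall's inequality.
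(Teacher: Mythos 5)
Your proposal is correct and follows essentially the same route as the paper: differentiate the flow equation with $\nabla_0$ (which commutes with $\partial_t$), use the connection-difference formula to rewrite $\nabla_0^k R_{i\bar j}$ in terms of $\nabla^\ell Rm$ and $\nabla_0^j g$ with the top-order term linear in $\nabla_0^k g$, and close a linear differential inequality for $\|\nabla_0^k g\|_{g_0}^2$ by Gronwall on the finite interval, inducting on $k$. The paper's displayed computation (\ref{eqn-g-i-j-k}) is exactly your schematic identity written in indices, so there is no substantive difference.
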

\begin{theorem}\label{thm-first-order}
Let $g(t)$ be a solution to the  K\"ahler-Ricci flow
(\ref{eqn-Kaehler-Ricci-flow}) on $[0,T)$ satisfying the following
assumption:
\begin{equation}
\|\nabla ^kRm\|\leq C(k,s)
\end{equation}
all over $M\times[0,s]$, for any nonnegative integer $k$ and any
$s\in (0,T)$, where  $T$ may be $\infty$ and $C(k,s)$ is a positive
constant depending on $k$ and $s$. Moreover, suppose that there is a
positive constant $C_0$ such that
\begin{equation*}
C_0^{-1}g_0\leq g(t)\leq C_0g_0
\end{equation*}
for any $t\in [0,T)$. Then, there is a positive constant $C$ such
that
\begin{equation*}
\|\nabla_0 g\|\leq C
\end{equation*}
all over $M\times [0,T)$.
\end{theorem}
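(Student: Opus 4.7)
The plan is to apply the maximum principle of Proposition~\ref{prop-omori-yau} to a scalar quantity comparable to $S := \|\nabla_0 g(t)\|_{g_0}^2$, extracting a bound that is uniform in $s$ even as $s \uparrow T$. For any fixed $s \in (0,T)$, Shi's estimate together with Lemma~\ref{lemma-local-boundness} already furnishes a finite (but $s$-dependent) bound for $S$ on $M \times [0,s]$, so the hypotheses of Proposition~\ref{prop-omori-yau} are in place; what must be shown is that the bound extracted from it does not depend on $s$.

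The heart of the proof is an evolution inequality for $S$ with a favorable sign structure. Starting from formula~(\ref{eqn-g-i-j-k}), differentiating $\|\nabla_0 g\|^2$ in time yields a cross term of the shape $\mathrm{Re}\langle \nabla R,\nabla_0 g\rangle_{g_0}$, whose factor $\nabla R$ is \emph{not} uniformly bounded in $t$ under the hypotheses. The K\"ahler identity $R_{i\bar j} = -\partial_i\partial_{\bar j}\log\det g$ is crucial here: differentiating it expresses $\nabla_k R_{i\bar j}$ as a combination of third partials of $g$, i.e.\ of the same differential order as $\nabla_t(\nabla_0 g)$. Symmetrically, $\Delta_t S$ contributes the nonnegative term $\|\nabla_t\nabla_0 g\|^2$ together with third-order cross terms containing $\nabla_0 g$. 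Careful pairing of these contributions, together with Young's inequality, should produce an estimate of the schematic form
\[
(\partial_t - \Delta_t)\, S \;\leq\; C_1 + C_2\, S - \epsilon\, \|\nabla_t\nabla_0 g\|^2,
\]
with constants depending only on $C_0$ and the geometry of $g_0$.

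To close the estimate, I would combine $S$ with the uniformly bounded auxiliary function $u(t) := \tr_{g_0}(g(t))$, whose Laplacian, by the classical Yau-Aubin computation, contains a positive term of order $\|\nabla_0 g\|^2$. Setting $h := S + A u$ for $A>0$ sufficiently large, the combined differential inequality takes the form $(\partial_t - \Delta_t)\, h \leq C - c\, h$ for uniform constants $C,c>0$. Applying Proposition~\ref{prop-omori-yau} to $h$ on $M \times [0,s]$ (after a harmless constant shift to enforce the strict inequality $\sup_M h(0) < \sup_{M \times [0,s]} h$, the alternative $h\equiv 0$ being trivial) yields $\sup h \leq C/c$ independently of $s$, and letting $s \uparrow T$ produces the desired global bound $\|\nabla_0 g\| \leq C$.

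The principal obstacle is securing the evolution inequality displayed above: the bad $\nabla R$ contribution to $\partial_t S$ is of higher differential order than any \emph{a priori} uniformly bounded quantity in our hypotheses, and only the K\"ahler-specific identity relating $R_{i\bar j}$ to $\partial\bar\partial\log\det g$ allows it to be rewritten at the level of $\nabla_t\nabla_0 g$, where it can be absorbed by the diffusive term from $\Delta_t S$. Without such a cancellation, a direct Gronwall estimate on $\partial_t S$ alone would yield only exponential growth in $t$, which is useless when $T=\infty$.
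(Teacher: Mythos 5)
Your overall architecture is the same as the paper's: an evolution inequality for a first-order (Calabi-type) quantity, addition of a large multiple of the trace $\tr_{g_0}g$ whose heat operator produces a good negative term of size $\|\nabla_0 g\|^2$, then Proposition~\ref{prop-omori-yau} on $M\times[0,s]$ (with Lemma~\ref{lemma-local-boundness} supplying the needed $s$-dependent boundedness) and uniformity in $s$. The difficulty is that the single step you declare to be the ``heart of the proof'' is asserted rather than proved, and the justification you sketch would not deliver it. Your claimed inequality
\begin{equation*}
(\partial_t-\Delta_t)\,S\;\le\;C_1+C_2\,S-\epsilon\,\|\nabla_t\nabla_0 g\|^2,\qquad S=\|\nabla_0g\|^2_{g_0},
\end{equation*}
has an error term that is \emph{linear} in $S$, and this linearity is exactly the content of the Calabi--Yau third-order computation. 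If one rewrites $\nabla\mathrm{Ric}$ via $R_{i\bar j}=-\partial_i\partial_{\bar j}\log\det g$ (equivalently, differentiates the traced identity (\ref{eqn-trace-of-second-derivative})) and cancels the leading third-order terms against those in $\Delta_t S$, the leftover terms include $\nabla_0(g^{-1})$ hitting second derivatives and derivatives of the quadratic gradient term, i.e.\ terms of schematic size $|\nabla_0^2g|\,|\nabla_0g|^2$ and $|\nabla_0g|^3\cdot|\nabla_0 g|$. Pairing with $\nabla_0 g$ and applying Young's inequality, as you propose, yields errors of size $S^{3/2}$ and $S^2$ (after absorbing $|\nabla_0^2 g|\,S$ into $\epsilon\|\nabla\nabla_0g\|^2$), not $C_2S$. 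A quadratic error cannot be absorbed by the linear good term $-cA\,\|\nabla_0g\|^2$ coming from $A\,\tr_{g_0}g$, so the final inequality $(\partial_t-\Delta)h\le C-ch$ does not follow from the argument as sketched, and the maximum-principle step collapses.

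What closes this gap in the paper is not Young's inequality but the exact perfect-square structure in the evolution of the Calabi quantity $Q=g^{\bar qi}g^{\bar jp}g^{\bar rk}g_{i\bar j;k}g_{p\bar q;\bar r}$ (note the contractions with $g(t)^{-1}$, not $g_0^{-1}$): equation (\ref{eqn-evolution-Q}) shows that the cubic and quartic terms assemble into two complete squares with a genuinely linear remainder $|\mathfrak{R}|\le C_3Q+C_4$, a delicate cancellation imported from Yau's Appendix A (and Chau \S 8, Shi). Your proof needs either to reproduce that computation (and to check it survives your choice of all-$g_0$ contractions) or to cite it explicitly; without it the key inequality is unsupported. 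A smaller point: the ``harmless constant shift'' cannot enforce $\sup_Mh(0)<\sup_{M\times[0,s]}h$; the correct reduction is a case distinction --- either the supremum equals $\sup_M h(\cdot,0)$, which is controlled by the initial data, or the strict inequality holds and Proposition~\ref{prop-omori-yau} applies, as in the paper.
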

\begin{proof} In the follows, covariant derivatives
are taken with respect to the initial metric $g_0$ and normal
coordinates are chosen with respect to $g_0$.

Let $S=(g_0)^{\bar j i}g_{i\bar j}$ and
\begin{equation*}
Q=g^{\bar qi}g^{\bar jp}g^{\bar r k}g_{i\bar j;k}g_{p\bar q;\bar r}.
\end{equation*}
We want to get an estimate of $Q$. By the assumptions, we have
$$nC_0^{-1}\leq S\leq nC_0.$$

By direct computation, we have
\begin{equation}
g_{i\bar j;k\bar l}=\big[(g_0)_{i\bar
\nu}\big((g_0)^{\bar\nu\mu}g_{\mu \bar j}\big)_k\big]_{\bar l}
=g_{i\bar j,k\bar l}+(g_0)_{i\bar \nu}\big((g_0)^{\bar
\nu\mu}\big)_{k\bar l}g_{\mu \bar j}=g_{i\bar j,k\bar l}+
(R_0)_{i\bar \mu k\bar l}g_{\mu\bar j},
\end{equation}
and
\begin{equation}\label{eqn-curvature-initial-metric}
\begin{split}
R_{i\bar jk\bar l}=&-g_{i\bar j,k\bar l}+g^{\bar
\nu\mu}g_{i\bar\nu;k}g_{\mu\bar j;\bar l}=-g_{i\bar j;k\bar
l}+g^{\bar \nu\mu}g_{i\bar\nu;k}g_{\mu\bar j;\bar l}+(R_0)_{i\bar
\mu k\bar l}g_{\mu\bar j},
\end{split}
\end{equation}
where a comma means a partial derivative. Hence
\begin{equation}\label{eqn-trace-of-second-derivative}
g^{\bar l k}g_{i\bar j;k\bar l}=-R_{i\bar j}+g^{\bar l k}g^{\bar
\nu\mu}g_{i\bar\nu;k}g_{\mu\bar j;\bar l}+ (R_0)_{i\bar \mu k\bar
l}g_{\mu\bar j}g^{\bar lk}.
\end{equation}
We are now ready to compute the evolution equation of $S$.
\begin{equation*}
\begin{split}
&\Big(\pd{}{t}-\Delta\Big)S\\
=&(g_0)^{\bar j i}(-R_{i\bar j}+cg_{i\bar j})-g^{\bar kl}S_{k\bar l}\\
=&-(g_0)^{\bar j i}(-R_{i\bar j}+cg_{i\bar j})-g^{\bar kl}(g_0)^{\bar ji}g_{i\bar j;k\bar l}\\
=&-(g_0)^{\bar j i}(-R_{i\bar j}+cg_{i\bar j})-(g_0)^{\bar
ji}\big(-R_{i\bar j}+g^{\bar l k}g^{\bar
\nu\mu}g_{i\bar\nu;k}g_{\mu\bar j;\bar l}+ (R_0)_{i\bar \mu k\bar
l}g_{\mu\bar j}g^{\bar lk}\big)\\
=&-(g_0)^{\bar ji}g^{l\bar k}g^{\bar \nu\mu}g_{i\bar\nu;k}g_{\mu\bar
j;\bar l}+cS-(R_0)_{i\bar j k\bar l}g_{j\bar i}g^{l\bar k}\\
\leq& -c_1 Q+C_2,
\end{split}
\end{equation*}
since the curvatures of $g_0$ is bounded and that the metrics are
uniformly equivalent.

A similar computations as in Appendix A of Yau
\cite{Yau-1978-Ricci-flat}(See Remark \ref{rm-evolution-Q}) give us
the following evolution equation of $Q$.
\begin{equation}\label{eqn-evolution-Q}
\begin{split}
&\Big(\pd{}{t}-\Delta\Big)Q\\
=&-\sum_{i,j,k,l}\frac{1}{\lambda_{i}\lambda_j\lambda_k\lambda_l}\Big|g_{i\bar
k;j\bar l}-\sum_{\gamma}\frac{1}{\lambda_\gamma}g_{i\bar\gamma
;j}g_{\gamma\bar k;\bar
l}\Big|^2\\
&-\sum_{i,q,k,\mu}\frac{1}{\lambda_i\lambda_k\lambda_q\lambda_\mu}\Big|g_{i\bar
q;k\mu}-\sum_{\alpha}\frac{1}{\lambda_\alpha}(g_{\alpha\bar q
;i}g_{k\bar\alpha;\mu}+g_{\alpha\bar q;\mu}g_{i\bar\alpha;
k})\Big|^2+\mathfrak{R}
\end{split}
\end{equation}
with
\begin{equation*}
|\mathfrak{R}|\leq C_3Q+C_4,
\end{equation*}
where we have chosen a normal coordinate of $g_0$ such that
$g_{i\bar j}=\lambda_i\delta_{ij}.$ Then, by choosing $C_5$ be such
that $c_1C_5-C_3=1$,
\begin{equation}\label{eqn-Q-S}
\Big(\pd{}{t}-\Delta\Big)(Q+C_5S)\leq-(Q+C_5S)+C_6.
\end{equation}

Fixed $s\in (0,T)$, by Lemma \ref{lemma-local-boundness}, $Q+C_5S$
is a bounded function on $M\times [0,s]$. By Proposition
\ref{prop-omori-yau} on any closed interval $[0,s]$, if
\begin{equation*}
\sup_{M\times [0,s]}(Q+C_5S)>\sup_{M}(Q+C_5S)(\cdot,0)=nC_5,
\end{equation*}
there exits a sequence $(x_k,t_k)\in M\times(0,s]$, such that
\begin{equation*}
\lim_{k\to\infty}(Q+C_5S)(x_k,t_k)=\sup_{M\times [0,s]}(Q+C_5S),\
\pd{(Q+C_5S)}{t}(x_k,t_k)\geq 0,
\end{equation*}
and
\begin{equation*}
\limsup_{k\to\infty}\Delta(Q+C_5S)(x_k,t_k)\leq 0.
\end{equation*}
Then, by (\ref{eqn-Q-S}),
\begin{equation*}
\begin{split}
0\leq&
\liminf_{k\to\infty}\Big(\pd{}{t}-\Delta\Big)(Q+C_5S)(x_k,t_k)\\
\leq&-\lim_{k\to\infty}(Q+C_5S)(x_k,t_k)+C_6\\
=&-\sup_{M\times [0,s]}(Q+C_5S)+C_6.
\end{split}
\end{equation*}
That is,
$$\sup_{M\times [0,s]}(Q+C_5S)\leq C_5.$$
Since $s\in (0,T)$ is arbitrary,
\begin{equation*}
Q\leq Q+C_5S\leq \max\{C_6,nC_5\}
\end{equation*}
all over $M\times [0,T)$.
\end{proof}
\begin{remark}\label{rm-evolution-Q}
We explain the computation of the evolution equation
(\ref{eqn-evolution-Q}) of $Q$ in more details. Note that, locally,
we have
\begin{equation*}
g_{i\bar j}=u_{i\bar j}+(g_0)_{i\bar j}.
\end{equation*}
Then, in a local coordinate, the K\"ahler-Ricci flow
(\ref{eqn-Kaehler-Ricci-flow}) becomes
\begin{equation*}
(u_t)_{i\bar j}=\Big(\log\frac{\det(u_{k\bar l}+(g_0)_{k\bar
l})}{\det(g_0)_{i\bar j}}\Big)_{i\bar j}+(R_0)_{i\bar
j}+c(g_0)_{i\bar j}
\end{equation*}
which makes our settings the same as in \S 8 of Chau
\cite{Chau-2004-JDG}. An detailed computation can be found in Shi \cite{Shi-RFU}.
\end{remark}

\begin{theorem}\label{thm-higher-order}
Let assumptions be the same as in the last thoerem. Then, for any
nonnegative integer $k$, there is a positive constant $A_k$, such
that $$\|\nabla^k_{0} g \|\leq A_k$$ all over $M\times [0,T)$.
\end{theorem}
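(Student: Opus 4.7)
The plan is to proceed by induction on $k$. The cases $k = 0$ (uniform equivalence) and $k = 1$ (Theorem \ref{thm-first-order}) are already in hand. Assume inductively that $\|\nabla_0^j g\| \leq A_j$ holds on $M \times [0, T)$ for every $j \leq k$; the task is to produce the analogous bound at order $k+1$.

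In parallel with the quantities $S$ and $Q$ of Theorem \ref{thm-first-order}, introduce for each $j \geq 0$ the scalar
\begin{equation*}
Q_j := g^{\bar q i}\, g^{\bar j p}\, g^{\bar r_1 s_1}\cdots g^{\bar r_j s_j}\, g_{i\bar j;\, s_1\cdots s_j}\, g_{p\bar q;\, \bar r_1\cdots \bar r_j},
\end{equation*}
so that $Q_j$ is, up to uniform constants, $\|\nabla_0^j g\|_g^2$; in particular $Q_0$ and $Q_1$ are $S$ and $Q$ in the notation of Theorem \ref{thm-first-order}, and the inductive hypothesis reads $Q_j \leq B_j$ on $M \times [0,T)$ for every $j \leq k$. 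The central computation, generalizing the Yau-type calculation of Remark \ref{rm-evolution-Q}, is an evolution inequality
\begin{equation*}
\Big(\pd{}{t}-\Delta\Big) Q_k \leq -\, c\, Q_{k+1} + C_3^{(k)} Q_k + C_4^{(k)},
\end{equation*}
in which the negative $-c\, Q_{k+1}$ is extracted from the ``good square'' in the evolution via Kato's inequality $|A-B|^2 \geq \tfrac12 |A|^2 - |B|^2$, and the constants depend on $C_0$, on the curvatures of $g_0$, on the hypothesized bounds for $\|\nabla^j Rm\|$, and on the inductively-bounded $Q_0, \dots, Q_{k-1}$. The same derivation at order $k+1$, but simply discarding the good square as non-positive, gives
\begin{equation*}
\Big(\pd{}{t}-\Delta\Big) Q_{k+1} \leq C_3^{(k+1)} Q_{k+1} + C_4^{(k+1)}.
\end{equation*}

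Fix $D > C_3^{(k+1)}/c$ and set $\Phi := Q_{k+1} + D\, Q_k$. Summing the two inequalities gives
\begin{equation*}
\Big(\pd{}{t}-\Delta\Big) \Phi \leq -Q_{k+1} + \big(D\, C_3^{(k)} B_k + D\, C_4^{(k)} + C_4^{(k+1)}\big) =: -Q_{k+1} + C_5,
\end{equation*}
and since $Q_k \leq B_k$ the right-hand side is bounded by $-\Phi + (C_5 + D B_k)$. By Lemma \ref{lemma-local-boundness}, $\Phi$ is bounded on each slab $M \times [0,s]$ with $s < T$, so Proposition \ref{prop-omori-yau} applies exactly as in the proof of Theorem \ref{thm-first-order} and yields $\sup_{M\times[0,s]} \Phi \leq \max\{\Phi|_{t=0},\, C_5 + D B_k\}$, a bound independent of $s$. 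Letting $s \to T$ gives $Q_{k+1} \leq \Phi \leq B_{k+1}$ uniformly on $M \times [0, T)$, completing the induction.

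The main obstacle is the Yau-type computation of $(\partial_t - \Delta) Q_k$ at arbitrary order. Extending (\ref{eqn-evolution-Q}) requires systematically tracking commutators between $\nabla$ and $\nabla_0$ through $k$-fold covariant derivatives and expanding $\nabla_0^k \mathrm{Ric}$ into its constituents $\nabla^j Rm$ (for $j \leq k$) and products of $\nabla_0^i g$ (for $i \leq k$), so that the top background derivative of $g$ appears with the correct sign to supply, via Kato's inequality, the $-c\, Q_{k+1}$ needed for closure; all remaining cross-terms must then be absorbed into $C_3^{(k)} Q_k + C_4^{(k)}$ using the inductive bounds on $Q_0, \dots, Q_{k-1}$. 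Once this bookkeeping is carried out, the maximum-principle step is a direct iteration of the scheme already established for $k = 1$.
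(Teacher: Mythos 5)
Your overall architecture (induction, a parabolic inequality for the top-order quantity plus one for the previous order, then the Omori--Yau maximum principle of Proposition \ref{prop-omori-yau}) matches the paper, but the key analytic step is asserted rather than proved, and as stated it is not correct. The claimed inequality
$\big(\pd{}{t}-\Delta\big)Q_{k+1}\leq C_3^{(k+1)}Q_{k+1}+C_4^{(k+1)}$,
obtained ``by discarding the good square,'' does not follow from the computation. First, distributing $k+1$ background derivatives over the quadratic term $g^{\bar\nu\mu}g_{i\bar\nu;\gamma}g_{\mu\bar j;\bar\delta}$ in identity (\ref{eqn-trace-of-second-derivative}) produces a term in which one factor carries $k+2$ derivatives of $g$; paired with $\nabla_0^{k+1}g$ this gives a contribution of size $Q_{k+1}^{1/2}Q_{k+2}^{1/2}$, which cannot be bounded by $C Q_{k+1}+C$ once the good term $-cQ_{k+2}$ has been thrown away. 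Second, and more fundamentally, even retaining the good term the evolution of the top-order quantity contains a superlinear term: already at the first inductive step ($k=1$, bounding $\nabla_0^2 g$) one gets a product of two copies of $\nabla_0^2 g$ paired against $\nabla_0^2 g$, i.e.\ a term of size $Q_{2}^{3/2}$ whose coefficient is \emph{not} controlled by the lower-order bounds. This is exactly the term $C_{11}Q_m^{3/2}$ in the paper's computation, and it is why a linear combination $\Phi=Q_{k+1}+DQ_k$ cannot close: no choice of $D$ makes $-DcQ_{k+1}$ dominate $Q_{k+1}^{3/2}$ for large $Q_{k+1}$. The paper resolves this with Chau's product trick, $Q=(Q_{k}+A)Q_{k+1}$: the good term $-cQ_{k+1}$ in the equation for the already-bounded $Q_{k}$, multiplied by $Q_{k+1}$, yields a quadratic good term $-cQ_{k+1}^{2}$ that dominates $Q_{k+1}^{3/2}$, while the gradient cross term $|\vv<\nabla Q_{k},\nabla Q_{k+1}>|\leq C Q_{k+1}Q_{k+2}^{1/2}$ is absorbed by the retained $-cQ_{k+2}$ through the choice of $A$, giving $(\pd{}{t}-\Delta)Q\leq -c Q^{2}+C$ and then the maximum principle. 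Your proposal is missing this device, and without it (or a genuinely sharper bookkeeping that eliminates the superlinear term, which is impossible at order $2$) the induction does not close.

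A secondary but important point: you allow your constants to depend ``on the hypothesized bounds for $\|\nabla^j Rm\|$.'' Those bounds are the slab-wise constants $C(j,s)$, which may blow up as $s\to T$; if they enter the evolution inequalities, the resulting estimate is not uniform on $M\times[0,T)$ and the theorem is not obtained. In the paper's computation the derivative-of-Ricci terms coming from $\pd{}{t}\nabla_0^m g$ cancel exactly against the Ricci term produced by substituting (\ref{eqn-trace-of-second-derivative}) into $\Delta Q_m$, so the constants depend only on $C_0$, on $g_0$ and its curvature, and on the inductive bounds; the curvature hypothesis is used only through Lemma \ref{lemma-local-boundness}, to guarantee that the quantities are bounded on each slab $M\times[0,s]$ so that Proposition \ref{prop-omori-yau} is applicable. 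Your write-up should make this cancellation explicit and keep the $C(j,s)$ out of the final constants.
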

\begin{proof} We prove it by induction on $k$. When $k=0$, it is by
assumptions. When $k=1$, it is just the last theorem. Suppose that
the inequality is true for $k=0,1,\cdots, m-1$, we want to get the
inequality for $k=m$. Because the metrics are uniformly equivalent,
it suffices to give an estimate to the quantity
\begin{equation*}
Q_m=\|\nabla_0^mg\|_{g_0}^{2}=(g_0)^{\beta_1\alpha_1}\cdots
(g_0)^{\beta_m\alpha_m}(g_0)^{\bar li}(g_0)^{\bar jk}g_{i\bar
j;\alpha_1\alpha_2\cdots\alpha_m}g_{k\bar l;
\beta_1\beta_2\cdots\beta_m},
\end{equation*}
where $\alpha_i's$ and $\beta_i's$ belong to $\{1,2,\cdots,n,\bar
1,\bar 2,\cdots,\bar n\}$.
\begin{equation*}
\begin{split}
&\Big(\pd{}{t}-\Delta\Big)Q_m\\
=&2(g_0)^{\beta_1\alpha_1}\cdots (g_0)^{\beta_m\alpha_m}(g_0)^{\bar
li}(g_0)^{\bar jk}(-R_{i\bar
j;\alpha_1\alpha_2\cdots\alpha_m}+cg_{i\bar
j;\alpha_1\alpha_2\cdots\alpha_m})g_{k\bar l;
\beta_1\beta_2\cdots\beta_m}\\
&-2g^{\bar \mu\lambda}(g_0)^{\beta_1\alpha_1}\cdots
(g_0)^{\beta_m\alpha_m}(g_0)^{\bar li}(g_0)^{\bar jk}g_{i\bar
j;\alpha_1\alpha_2\cdots\alpha_m\lambda\bar\mu}g_{k\bar l;
\beta_1\beta_2\cdots\beta_m}\\
&-2g^{\bar \mu\lambda}(g_0)^{\beta_1\alpha_1}\cdots
(g_0)^{\beta_m\alpha_m}(g_0)^{\bar li}(g_0)^{\bar jk}g_{i\bar
j;\alpha_1\alpha_2\cdots\alpha_m\lambda}g_{k\bar l;
\beta_1\beta_2\cdots\beta_m\bar\mu}\\
\leq& -2(g_0)^{\beta_1\alpha_1}\cdots
(g_0)^{\beta_m\alpha_m}(g_0)^{\bar li}(g_0)^{\bar jk}R_{i\bar
j;\alpha_1\alpha_2\cdots\alpha_m}g_{k\bar l;
\beta_1\beta_2\cdots\beta_m}\\
&-2g^{\bar\mu\lambda}(g_0)^{\beta_1\alpha_1}\cdots
(g_0)^{\beta_m\alpha_m}(g_0)^{\bar li}(g_0)^{\bar jk}g_{i\bar
j;\lambda\bar\mu\alpha_1\alpha_2\cdots\alpha_m}g_{k\bar l;
\beta_1\beta_2\cdots\beta_m}\\
&+C_1Q_m+C_2Q_m^{\frac{1}{2}}-\frac{1}{C_0}Q_{m+1}\\
\leq&-2(g_0)^{\beta_1\alpha_1}\cdots
(g_0)^{\beta_m\alpha_m}(g_0)^{\bar li}(g_0)^{\bar jk}R_{i\bar
j;\alpha_1\alpha_2\cdots\alpha_m}g_{k\bar l;
\beta_1\beta_2\cdots\beta_m}\\
&-2(g_0)^{\beta_1\alpha_1}\cdots (g_0)^{\beta_m\alpha_m}(g_0)^{\bar
li}(g_0)^{\bar jk}(g^{\bar\mu\lambda}g_{i\bar
j;\lambda\bar\mu})_{;\alpha_1\alpha_2\cdots\alpha_m}g_{k\bar l;
\beta_1\beta_2\cdots\beta_m}\\
&+C_6Q_m^{\frac{1}{2}}Q_{m+1}^\frac{1}{2}+C_5Q_m^{\frac{3}{2}}+C_4Q_m+C_3Q_m^{\frac{1}{2}}-\frac{1}{C_0}Q_{m+1}\\
\end{split}
\end{equation*}
where we have used the Ricci identity and the induction hypothesis.

Substituting identity (\ref{eqn-trace-of-second-derivative}) into the last inequality, we get
\begin{equation*}
\begin{split}
&\Big(\pd{}{t}-\Delta\Big)Q_m\\
\leq&-2(g_0)^{\beta_1\alpha_1}\cdots (g_0)^{\beta_m\alpha_m}(g_0)^{\bar
li}(g_0)^{\bar jk}R_{i\bar j;\alpha_1\alpha_2\cdots\alpha_m}g_{k\bar
l;
\beta_1\beta_2\cdots\beta_m}\\
&-2(g_0)^{\beta_1\alpha_1}\cdots (g_0)^{\beta_m\alpha_m}(g_0)^{\bar
li}(g_0)^{\bar jk}(-R_{i\bar j}+g^{\bar \delta\gamma}g^{\bar
\nu\mu}g_{i\bar\nu;\gamma}g_{\mu\bar j;\bar \delta}\\
&+ (R_0)_{i\bar \mu \gamma\bar \delta}g_{\mu\bar j}g^{\bar
lk})_{;\alpha_1\alpha_2\cdots\alpha_m}g_{k\bar l;
\beta_1\beta_2\cdots\beta_m}\\
&+C_6Q_m^{\frac{1}{2}}Q_{m+1}^\frac{1}{2}+C_5Q_m^{\frac{3}{2}}+C_4Q_m+C_3Q_m^{\frac{1}{2}}-\frac{1}{C_0}Q_{m+1}\\
\leq&
C_{10}Q_m^{\frac{1}{2}}Q_{m+1}^\frac{1}{2}+C_9Q_m^{\frac{3}{2}}+C_8Q_m+C_7Q_m^{\frac{1}{2}}-\frac{1}{C_0}Q_{m+1}\\
\leq &C_{11}Q_{m}^\frac{3}{2}+C_{12}-c_{13}Q_{m+1}.
\end{split}
\end{equation*}

The same computation using the induction hypothesis provides us
\begin{equation*}
\Big(\pd{}{t}-\Delta \Big)Q_{m-1}\leq C_{14}-c_{15}Q_{m}.
\end{equation*}
Moreover, note that
\begin{equation*}
\begin{split}
&|\vv<\nabla Q_{m-1},\nabla Q_{m-1}> |\\ =&|g^{\bar\mu\lambda}(Q_{m-1})_{\lambda}(Q_m)_{\bar\mu}|\\
=&|g^{\bar\mu\lambda}(g_0)^{\beta_1\alpha_1}\cdots
(g_0)^{\beta_{m-1}\alpha_{m-1}}(g_0)^{\bar li}(g_0)^{\bar
jk}(g_{i\bar j;\alpha_1\alpha_2\cdots\alpha_{m-1}}g_{k\bar l;
\beta_1\beta_2\cdots\beta_{m-1}})_{;\lambda}\times\\
&(g_0)^{\beta'_1\alpha'_1}\cdots
(g_0)^{\beta'_m\alpha'_m}(g_0)^{\bar l'i'}(g_0)^{\bar j'k'}(g_{i\bar
j;\alpha'_1\alpha'_2\cdots\alpha'_m}g_{k'\bar l';
\beta'_1\beta'_2\cdots\beta'_m})_{\bar\mu}|\\
\leq&C_{16}Q_{m}Q_{m+1}^\frac{1}{2}
\end{split}
\end{equation*}
where we have used that the metrics are uniformly equivalent and the
induction hypothesis.

Let $Q=(Q_{m-1}+A)Q_m$ with $A$ a positive constant to be
determined. We have,
\begin{equation*}
\begin{split}
&\Big(\pd{}{t}-\Delta\Big)Q\\
=&Q_m
\Big(\pd{}{t}-\Delta\Big)Q_{m-1}+(Q_{m-1}+A)\Big(\pd{}{t}-\Delta\Big)Q_m
-2\Ree\{\vv<\nabla
Q_{m-1},\nabla Q_m >\}\\
\leq&Q_m (-c_{15}Q_m +C_{14})+(Q_{m-1}+A)(-c_{13}Q_{m+1}+C_{11}Q_m ^{\frac{3}{2}}+C_{12})+2C_{16}Q_m Q_{m+1}^\frac{1}{2}\\
\leq&-\frac{c_{15}}{2}Q_m ^{2}+C_{14}
Q_m+(Q_{m-1}+A)(-c_{13}Q_{m+1}+C_{11}Q_m ^{\frac{3}{2}}+C_{12})+\frac{2C_{16}^{2}}{c_{15}}Q_{m+1}\\
\leq&
-\frac{c_{15}}{2}Q_m ^{2}+C_{14}Q_m +(Q_{m-1}+A)(C_{11}Q_m ^{\frac{3}{2}}+C_{12})\\
\leq& -c_{16}Q^{2}+C_{17}
\end{split}
\end{equation*}
if we chose $A$ be such that $-Ac_{13}+\frac{2C_{16}^2}{c_{15}}=0$,
where we have used the induction hypothesis.

Similar as in the proof of Theorem \ref{thm-first-order} using
Proposition \ref{prop-omori-yau}, we have
\begin{equation*}
Q\leq \sqrt{\frac{C_{17}}{c_{16}}}.
\end{equation*}
Therefore, $Q_m\leq A^{-1}\sqrt{\frac{C_{17}}{c_{16}}}$. This
completes the proof.
\end{proof}
\begin{remark}
The trick used in the proof is basically due to Chau
\cite{Chau-2004-JDG}  (\S 9). We only simplify it by using norms
with respect to the initial metric $g_0$ instead of using $g$ in
Chau \cite{Chau-2004-JDG}.
\end{remark}
\begin{corollary}\label{cor-curvature-bound-1}
Let assumptions be the same as in Theorem \ref{thm-first-order}.
Then, for any nonnegative integer $k$, there is a positive constant
$A_k$ such that $$\|\nabla^kRm\|\leq A_k$$ all over $M\times[0,T)$.
\end{corollary}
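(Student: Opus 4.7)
The plan is to deduce this corollary from Theorem \ref{thm-higher-order} by expressing $\nabla^k Rm$ as a tensorial polynomial in quantities that have already been uniformly bounded, then invoking the uniform equivalence of $g(t)$ and $g_0$ to pass from $g_0$-norms to $g$-norms.

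For $k=0$, I would use the identity (\ref{eqn-curvature-initial-metric}) derived in the course of proving Theorem \ref{thm-first-order}, namely
\[
R_{i\bar j k\bar l} = -g_{i\bar j;k\bar l} + g^{\bar \nu\mu}g_{i\bar \nu;k}g_{\mu\bar j;\bar l} + (R_0)_{i\bar \mu k\bar l}g_{\mu\bar j},
\]
where the semicolons denote $g_0$-covariant derivatives. This writes $Rm$ as a polynomial tensor expression in $g$, $g^{-1}$, $\nabla_0 g$, $\nabla_0^2 g$, and $R_0$. By Theorem \ref{thm-higher-order}, the $\nabla_0^j g$ are uniformly bounded; by the uniform-equivalence hypothesis of Theorem \ref{thm-first-order}, $g$ and $g^{-1}$ are bounded; and fixing any $s\in(0,T)$, the Shi-estimate hypothesis at $t=0$ gives $\|R_0\|_{g_0}\leq C(0,s)$. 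This yields $\|Rm\|_g\leq A_0$.

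For $k\geq 1$, I would write $\nabla = \nabla_0 + A$, where $A$ is the tensor obtained from the difference of the Christoffel symbols of $g$ and $g_0$. Schematically $A = g^{-1}\ast \nabla_0 g$, so $A$ and its $\nabla_0$-derivatives are uniformly bounded by Theorem \ref{thm-higher-order}. Iterating gives a schematic expansion
\[
\nabla^k Rm = \sum \nabla_0^{j_0}Rm \ast \nabla_0^{j_1}A \ast \cdots \ast \nabla_0^{j_p}A, \qquad j_0+\cdots+j_p+p=k,
\]
and differentiating (\ref{eqn-curvature-initial-metric}) $j_0$ times expresses $\nabla_0^{j_0}Rm$ as a polynomial in $g,g^{-1}$, $\nabla_0^i g$ with $i\leq j_0+2$, and $\nabla_0^i R_0$ with $i\leq j_0$. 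The last set of factors is bounded because the Shi-estimate hypothesis at $t=0$ bounds $\|\nabla^i Rm\|(\cdot,0)$ for every $i$ (take any fixed $s\in(0,T)$), and these initial $g$-covariant derivatives coincide with the $g_0$-covariant derivatives of $R_0$.

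The only labor is the combinatorial bookkeeping of the polynomial expansion, which is routine and presents no analytic obstacle. Combining the uniform bounds of every factor yields $\|\nabla^k Rm\|_{g_0}\leq \tilde A_k$; by uniform equivalence of $g(t)$ and $g_0$, this transfers to $\|\nabla^k Rm\|_g\leq A_k$ on $M\times[0,T)$, as required.
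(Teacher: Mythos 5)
Your proposal is correct and follows essentially the same route as the paper: both use identity (\ref{eqn-curvature-initial-metric}) together with Theorem \ref{thm-higher-order} to bound the $g_0$-covariant derivatives of $Rm$, and then convert $\nabla^k Rm$ into $\nabla_0$-derivatives of $g$, $Rm$, and $Rm_0$ via the Christoffel-difference tensor $g^{-1}\ast\nabla_0 g$ (the paper carries this out explicitly for $k=1$ and then iterates). Your explicit remark that the needed bounds on $\nabla_0^i R_0$ come from the hypothesis at $t=0$ is a point the paper leaves implicit, but it is not a departure in method.
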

\begin{proof} Note the by equation
(\ref{eqn-curvature-initial-metric}), we have
\begin{equation*}
R_{i\bar jk\bar l}=-g_{i\bar j;k\bar l}+g^{\bar
\nu\mu}g_{i\bar\nu;k}g_{\mu\bar j;\bar l}+(R_0)_{i\bar \mu k\bar
l}g_{\mu\bar j},
\end{equation*}
where covariant derivatives ";" and normal coordinates are taken
with respect to $g_0$. Hence, by the last theorem, for any
nonnegative integer $k$, there is a positive constant $C_k$ such
that
\begin{equation*}
\|\nabla_0^{k}Rm\|\leq C_k
\end{equation*}
all over $M\times [0,T)$. In particular, the corollary is true for
$k=0$.

When $k=1$, note that
\begin{equation*}
\begin{split}
\nabla_{\lambda}R_{i\bar jk\bar
l}=&(g^{\bar\beta\alpha}g^{\bar\delta\gamma}R_{\alpha\bar j
\gamma\bar l})_{,\lambda}g_{i\bar\beta}g_{k\bar\delta}\\
=&R_{i\bar jk\bar l;\lambda}-g^{\bar\beta\alpha}R_{\alpha\bar j
k\bar l}g_{i\bar\beta;\lambda}-g^{\bar\delta\gamma}R_{i\bar j
\gamma\bar l}g_{k\bar\delta;\lambda}.
\end{split}
\end{equation*}
Hence the corollary is true for $k=1$.

Computing step by step, we can express $\nabla^k Rm$ as a
combination of covariant derivatives of $g$ and $Rm_0$. Therefore,
the corollary is true for all nonnegative integer $k$.

\end{proof}
We come to prove the main result of this article.

\begin{theorem}\label{thm-curvature-bound}Let $g(t)$ with $t\in [0,T)$ be a complete solution to the
K\"ahler-Ricci flow (\ref{eqn-Kaehler-Ricci-flow}) satisfying Shi's
estimate (\ref{eqn-Shi-estimate}). Suppose that there a is positive
constant $C$ such that
\begin{equation*}
C^{-1}g(0)\leq g(t)\leq Cg(0).
\end{equation*}
Then, for any nonnegative integer $k$, there are two positive
constants $A_k$ and $B_k$ such that
\begin{equation*}
\|\nabla^kRm\|^2(x,t)\leq A_k+\frac{B_k}{t^k}
\end{equation*}
for any $(x,t)\in M\times [0,T)$.
\end{theorem}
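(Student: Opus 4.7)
The strategy is to split $[0,T)$ into a short-time piece $(0,t_0]$, where Shi's estimate (\ref{eqn-Shi-estimate}) applied directly supplies the $B_k/t^k$ term, and a long-time piece $[t_0,T)$, where I will upgrade Shi's estimate to a genuinely uniform bound by invoking Corollary \ref{cor-curvature-bound-1} after a time-shift. Fix $t_0\in(0,T)$, say $t_0=\min\{1,T/2\}$. For $(x,t)\in M\times(0,t_0]$ and any nonnegative integer $k$, Shi's estimate with $s=t_0$ gives
\begin{equation*}
\|\nabla^kRm\|^2(x,t)\leq\frac{C(k,t_0)}{t^k},
\end{equation*}
so the short-time regime is already of the desired form.

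For the long-time piece I would consider the time-shifted solution $\tilde g(\tau):=g(t_0+\tau)$ for $\tau\in[0,T-t_0)$, which again solves the K\"ahler--Ricci flow (\ref{eqn-Kaehler-Ricci-flow}). The hypothesis $C^{-1}g(0)\leq g(t)\leq Cg(0)$ immediately yields $C^{-2}\tilde g(0)\leq\tilde g(\tau)\leq C^2\tilde g(0)$. Moreover, for any $\sigma\in(0,T-t_0)$ and any $\tau\in[0,\sigma]$, Shi's estimate on the original flow with $s=t_0+\sigma$ gives
\begin{equation*}
\|\nabla^kRm_{\tilde g}\|^2(x,\tau)=\|\nabla^kRm\|^2(x,t_0+\tau)\leq\frac{C(k,t_0+\sigma)}{(t_0+\tau)^k}\leq\frac{C(k,t_0+\sigma)}{t_0^k},
\end{equation*}
so $\|\nabla^kRm_{\tilde g}\|$ is uniformly bounded on $M\times[0,\sigma]$ for every $\sigma\in(0,T-t_0)$. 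Thus $\tilde g$ satisfies precisely the hypotheses of Corollary \ref{cor-curvature-bound-1} (itself obtained from Theorems \ref{thm-first-order} and \ref{thm-higher-order}), producing a constant $A_k$ with $\|\nabla^kRm\|^2(x,t)\leq A_k$ on $M\times[t_0,T)$.

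Combining the two regimes yields $\|\nabla^kRm\|^2(x,t)\leq A_k+C(k,t_0)/t^k$ for all $(x,t)\in M\times(0,T)$; for $t=0$ the bound is vacuous when $k\geq 1$, and when $k=0$ it follows from continuity of the smooth family $g(t)$ together with the uniform bound on $(0,t_0]$. I do not foresee a substantive obstacle here: the analytically hard work has already been carried out in Theorems \ref{thm-first-order} and \ref{thm-higher-order} and in Corollary \ref{cor-curvature-bound-1}; the only remaining task is the bookkeeping needed to translate Shi's estimate into the \emph{uniform-on-$M\times[0,s]$} form demanded by the corollary's hypothesis, which is accomplished precisely by shifting the time origin by $t_0$.
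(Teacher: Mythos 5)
Your proposal is correct and follows essentially the same route as the paper: fix an intermediate time, apply Corollary \ref{cor-curvature-bound-1} to the flow restricted (time-shifted) to $[t_0,T)$ to get the uniform bound $A_k$, and use Shi's estimate (\ref{eqn-Shi-estimate}) directly on the short-time piece to produce the $B_k/t^k$ term. Your version merely spells out the bookkeeping (the equivalence constant $C^2$ for the shifted flow and the verification of the corollary's hypotheses) that the paper leaves implicit.
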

\begin{proof} Fixed $\epsilon\in (0,T)$. Then, the family $g(t)$ with $t\in [\epsilon, T)$ is a
solution the K\"ahler-Ricci flow (\ref{eqn-Kaehler-Ricci-flow}) satisfies the assumption of Corollary \ref{cor-curvature-bound-1}.
Hence,
\begin{equation*}
\|\nabla^kRm\|^2(x,t)\leq A_k
\end{equation*}
for any $(x,t)\in M\times[\epsilon,T)$. Combining this with Shi's estimate, we complete the proof.

\end{proof}
\begin{remark}
Theorem \ref{thm-higher-order} is sometimes more useful than Theorem \ref{thm-curvature-bound}. For example,
when $T=\infty$, we can extract convergent subsequence of metrics by Theorem \ref{thm-higher-order}.
\end{remark}
\begin{remark}
The method will not work for the real case. The first obstacle is that we do not have an identity
which is similar with (\ref{eqn-trace-of-second-derivative}).

\end{remark}
\section{An application.}
\begin{theorem}
Let $\tilde g$ be a complete K\"ahler metric on $M^n$ with bounded sectional curvature. Suppose that
\begin{equation*}
-\tilde R_{i\bar j}+c\tilde g_{i\bar j}=f_{i\bar j}
\end{equation*}
with $f$ a smooth bounded  function on $M^n$. Then, the K\"ahler-Ricci flow (\ref{eqn-Kaehler-Ricci-flow})
with initial metric $\tilde g$ has a long time solution satisfying Shi's estimate (\ref{eqn-Shi-estimate}).
\end{theorem}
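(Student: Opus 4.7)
The plan is to reduce the K\"ahler--Ricci flow to a scalar parabolic complex Monge--Amp\`ere equation, prove uniform $C^2$ estimates on the potential, and then invoke Theorem~\ref{thm-curvature-bound} to obtain bounded curvatures so as to continue the solution indefinitely.

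Short-time existence (Shi) gives a solution on a maximal interval $[0, T^*)$ satisfying (\ref{eqn-Shi-estimate}); I would argue by contradiction, assuming $T^* < \infty$. Write $g(t) = \tilde g + \iddbar u(t)$ with $u(0) = 0$. Using $R_{i\bar j}(g) = -\partial_i\partial_{\bar j}\log\det g$ and the hypothesis $-\tilde R_{i\bar j} + c\tilde g_{i\bar j} = f_{i\bar j}$, the flow (\ref{eqn-Kaehler-Ricci-flow}) reduces (up to a function of $t$ only, absorbed by the normalization $u(0) = 0$) to
\[
\pd{u}{t} = \log\frac{\det(\tilde g + \iddbar u)}{\det \tilde g} + cu + f, \qquad u(0) = 0.
\]
Differentiating in $t$ gives $(\partial_t - \Delta_{g(t)}) \dot u = c \dot u$; applying Proposition~\ref{prop-omori-yau} on each sub-interval $[0, s] \subset [0, T^*)$, on which Shi's estimate supplies the metric equivalence required by that proposition, together with a standard $-\epsilon t$ barrier, yields $|\dot u| \leq \|f\|_\infty e^{ct}$ uniformly on $[0, T^*)$. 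Integration then gives a uniform $C^0$ bound on $u$.

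Next I would perform the standard Aubin--Yau/Chern--Lu computation of $(\partial_t - \Delta_{g(t)}) \log \tr_{\tilde g} g$, whose error term is controlled by the lower bisectional curvature bound on $\tilde g$. Combining this with the identity $\Delta_{g(t)} u = n - \tr_{g(t)} \tilde g$ and choosing $A$ large, the auxiliary function $H := \log \tr_{\tilde g} g - A u$ obeys an inequality of the form $(\partial_t - \Delta_{g(t)}) H \leq -\tr_{\tilde g} g + C$. Proposition~\ref{prop-omori-yau} then bounds $H$ from above, and so $\tr_{\tilde g} g$ as well. Together with the $\dot u$-bound on $\det g/\det \tilde g$, this gives
\[
C_0^{-1} \tilde g \leq g(t) \leq C_0 \tilde g \qquad \text{on } M \times [0, T^*).
\]
Theorem~\ref{thm-curvature-bound} now bounds $\|Rm\|$ on $[0, T^*)$, so Shi's short-time existence extends the flow past $T^*$, contradicting maximality; hence $T^* = \infty$, and the curvature bounds from Theorem~\ref{thm-curvature-bound} simultaneously certify (\ref{eqn-Shi-estimate}) on $[0, \infty)$.

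The main obstacle is the trace $C^2$ estimate in the complete noncompact setting: although Proposition~\ref{prop-omori-yau} requires a uniform-equivalence hypothesis that is \emph{not} known a priori on the full interval $[0, T^*)$, Shi's estimate supplies equivalence on each $[0, s]$ with $s < T^*$, and the bounds coming out of the maximum principle depend only on $\|f\|_\infty$, $c$, the curvature of $\tilde g$, and $T^*$, not on the equivalence constant on $[0, s]$. This allows the estimate to pass uniformly to $s \to T^*$ and close the argument.
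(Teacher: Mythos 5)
Your proposal follows essentially the same route as the paper: reduce to the parabolic Monge--Amp\`ere equation, bound $\dot u$ (hence $u$ and the volume ratio) by the maximum principle, run Yau's second-order estimate to get uniform equivalence $C_0^{-1}\tilde g\le g(t)\le C_0\tilde g$ on $[0,T^*)$, and then invoke Theorem~\ref{thm-curvature-bound} to bound the curvature and contradict maximality. The only difference is presentational: the paper simply cites Yau/Chau for the trace estimate and does not spell out the noncompact maximum-principle details that you handle via Proposition~\ref{prop-omori-yau} on subintervals $[0,s]$, so your argument is correct and matches the paper's proof.
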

\begin{proof}Let $g(t)$ with $[0,T)$ be maximal complete solution to the K\"ahler-Ricci flow (\ref{eqn-Kaehler-Ricci-flow})
with initial data $g(0)=\tilde g$ satisfying Shi's estimate (\ref{eqn-Kaehler-Ricci-flow}). It suffices to
show that $T=\infty$. Suppose that $T<\infty$, we want to get a contradiction.

As in Chau \cite{Chau-2004-JDG}, the K\"ahler-Ricci flow (\ref{eqn-Kaehler-Ricci-flow}) is related to
to the parabolic Monge-Amp\`ere equation:
\begin{equation}\label{eqn-monge-ampere}
\left\{\begin{array}{l}\pd{u}{t}=\log\frac{\det(\tilde g_{i\bar j}+u_{i\bar j})}{\det \tilde g_{i\bar j}}+cu+f\\u(x,0)=0.
\end{array}\right.
\end{equation}
$g_{i\bar j}=\tilde g_{i\bar j}+u_{i\bar j}$ is a solution of the K\"ahler-Ricci flow (\ref{eqn-Kaehler-Ricci-flow})
if $u$ is a solution to equation (\ref{eqn-monge-ampere}).

Let $w=u_t$. Then,
\begin{equation*}
\left\{\begin{array}{l}\pd{w}{t}=\Delta w+cw\\w(x,0)=f(x).
\end{array}\right.
\end{equation*}
By maximum principle, we know that
\begin{equation*}
|w|(x,t)\leq e^{ct}\sup |f|\leq e^{cT}\sup |f|:=C_1.
\end{equation*}
for any $(x,t)\in M\times [0,T)$. Moreover
\begin{equation*}
|u|(x,t)\leq C_1T:=C_2
\end{equation*}
for any $(x,t)\in M\times[0,T)$. By the same argument as in second order estimate of Monge-Amp\`ere equation
as in Yau \cite{Yau-1978-Ricci-flat} (see also Chau \cite{Chau-2004-JDG}), we know that
\begin{equation*}
\tilde g^{\bar ji}g_{i\bar j}=n+\Delta u\leq C_3.
\end{equation*}
By equation (\ref{eqn-monge-ampere}),
\begin{equation*}
\frac{\det g_{i\bar j}}{\det{\tilde g_{i\bar j}}}\geq c_4.
\end{equation*}
Let $\lambda_1\leq\lambda_2\leq\cdots\leq \lambda_n$ be the $n$ eigenvalues of $g_{i\bar j}$ with respect to $\tilde g_{i\bar j}$.
Then, we have
\begin{equation*}
\lambda_1+\lambda_2+\cdots+\lambda_n\leq C_3\ \mbox{and}\ \lambda_1\cdot\lambda_2\cdots\lambda_n\geq c_4.
\end{equation*}
Therefore $\lambda_n\leq C_3$ and $\lambda_1\geq \frac{c_4}{\lambda_2\lambda_3\cdots\lambda_n}\geq c_4C_3^{-n+1}:=c_5$ which
implies that $g(t)$ is uniformly equivalent to $\tilde g$. By Theorem \ref{thm-curvature-bound}, curvatures of
$g(t)$ is uniformly bounded. This violates that $g(t)$ is a maximal solution.
\end{proof}
\begin{remark}
When $M^n$ is a compact K\"ahler manifold, if the potential $f$ exists for the initial metric, then $f$
is automatically bounded. Hence, the K\"ahler-Ricci flow (\ref{eqn-Kaehler-Ricci-flow}) has a long time solution
in this case. This is just the long time existence result in Cao \cite{Cao}.
\end{remark}

\end{document}